\newtheorem{theorem}{Theorem}
\newtheorem{corollary}{Corollary}[theorem]
\newtheorem{lemma}[theorem]{Lemma}
\begin{document}

\title{Contraction and Robustness of Continuous Time Primal-Dual Dynamics}
\author{Hung D. Nguyen, Thanh Long Vu, Konstantin Turitsyn, Jean-Jacques Slotine \thanks{HN is with Nanyang Technological University; TLV, JJS, and KT are with the Department of Mechanical Engineering, Massachusetts Institute of Technology, Cambridge, MA. Work of KT was funded by DOE/GMLC 2.0 project: ``Emergency monitoring and controls through new technologies and analytics'', NSF CAREER award 1554171, and 1550015. HN was supported by NTU SUG, Siebel Fellowship, and VEF.}
}

\maketitle
\thispagestyle{empty}
\pagestyle{empty}
                       
\begin{abstract}      
The Primal-Dual (PD) algorithm is widely used in convex optimization to determine saddle points. While the stability of the PD algorithm can be easily guaranteed, strict contraction is nontrivial to establish in most cases. This work focuses on continuous, possibly non-autonomous PD dynamics arising in a network context, in distributed optimization, or in systems with multiple time-scales. We show that the PD algorithm is indeed strictly contracting in specific metrics and analyze its robustness establishing stability and performance guarantees for different approximate PD systems. We derive estimates for the performance of multiple time-scale multi-layer optimization systems, and illustrate our results on a primal-dual representation of the Automatic Generation Control of power systems.

\end{abstract}

\begin{IEEEkeywords}                           
Primal-dual dynamics, continuous optimization, strict contraction, robustness, hierarchical architecture
\end{IEEEkeywords}      


\section{Introduction}

\IEEEPARstart{M}{ulti-scale} optimization is ubiquitous in both natural and artificial systems. Multiple time-scales have long been viewed as a fundamental organizing principle in the modular architecture and evolution of complex systems ~\cite{simon1991architecture}. In engineering, such layering provides a powerful design architecture for decomposing multiple stage decision processes in networked infrastructure~\cite{chiang2007layering}. In cyber-physical networks such as the power grid, the structure of the dynamic equations is strongly constrained by physical laws. However, as a number of recent works have shown, e.g., \cite{NaliPD, zhao2014design, dorfler2016breaking}, the nature of these equations admits a natural optimization based perspective. To a large extent, the underlying ideas behind the approaches exploited in those works can be traced back to the classical saddle point problem, and its associated primal-dual (PD) algorithm. 

The saddle-point problem, first considered in the context of market equilibrium in economics, appears when the system is simultaneously minimizing a function over one set of its variables and maximizing it over the other variable set. Due to the unique characteristic of the cost function, namely, being convex in the first set variables while being concave in the second, the primal-dual algorithm consists of implementing gradient descent compatibly with the convexity/concavity of the cost function. This algorithm was introduced in the classic work of Arrow, Hurwicz, and Uzawa \cite{arrow1958studies}. Recently, the body of literature on this algorithm has been rapidly growing due to its efficiency in decentralized optimization in network applications \cite{Paganini, LowAGC, simpson2016input, zhao2016unified}. Similar saddle point problems also appear naturally in the context of machine learning, e.g., in  support vector machine representations~\cite{kosaraju2018primal}
and in the adversarial training of deep networks~\cite{madry2017towards}. 

\subsection{The PD algorithm in a distributed context}
In its simplest form, the primal dual (PD) dynamics minimizes the function $g(x) \in \mathbb{R}^n \to \mathbb{R}$ subject to a set of linear or nonlinear constraints $h(x) = 0$ with $h(x) \in \mathbb{R}^n \to \mathbb{R}^m$. The corresponding Lagrangian function is given by 
\begin{equation}\label{eq:L}
\ell(x,\nu) = g(x) + \nu^T h(x),
\end{equation}
with $\nu \in \mathbb{R}^m$ being the dual variables. We denote the full state of the system by $z = (x,\nu) \in \mathbb{R}^{n+m}$. The continuous time primal dual algorithm defines a dynamic system in $x(t)$ and $\nu(t)$ described by
\begin{align}
 \dot{x} &= - \partial_x \ell = -\partial_x g - (\partial_x h)^T \nu \\
 \dot{\nu} &= +\partial_\nu \ell = h
\end{align}


There are two common applications of primal-dual dynamics most frequently discussed in the literature. First, it can be naturally applied to design distributed continuous time optimization systems using the Lagrangian relaxation type of approach. Within this formulation, the large-scale optimization problem is represented in a superposition form $g(x) = \sum_k g_k (\mathcal{X}_k)$ with $\mathcal{X}_k$ denoting the set of variables for subsystem $k$. Coupling between the subsystems is enforced by constraining subsets of variables in different subproblems to be equal to each other. These additional constraints are represented as $h(x) = E^T x$, with $E$ being an incidence matrix of a directed graph defining equivalencies between variable replicas. In this setting, every subproblem is coupled to common dual variables but not other subproblem variables directly. Every subproblem can then be solved by an individual agent, while the dual dynamics gradually adjusts the dual variables until the equivalence constraints are satisfied.

Another typical application arises in network flows of natural or artificial nature characterized by some conservation laws. Assuming that some subset of variables represents the flows of the conserved quantities, the conservation laws are expressed as $h(x) = E x - q$, with $E$ being the incidence matrix reflecting the topology of interconnection of individual subsystems, and $q$ the vector of external source/sink flows. In equilibrium, the total flow on every node of the interconnection is balanced. However, during the transient dynamics violation can occur due to non-zero $\dot{\nu}$. These violations may be interpreted as the accumulation of the transported quantity on the node storage elements. In traditional circuits, the $\dot{\nu}$ terms represent charging of effective node capacitance. 
Whenever such a formulation of physical equations exists, one can naturally solve the distributed optimization problem complementing the Lagrangian with additional terms representing the objective of the controller.

Unless otherwise stated here we assume the constraints of the form $h(x) = Ex-q$, which leads to the following formulation 
\begin{subequations} \label{eq:PD}
\begin{align} \label{eq:primal}
\dot{x} &= -\partial_x g - E^T\nu \\
\dot{\nu} &= Ex - q
\end{align}
\end{subequations}


\subsection{Contributions}
In section \ref{sec:strongcontr}, we establish the strict contraction of continuous PD dynamics in the form given by \eqref{eq:PD}, by constructing explicitly contracting metrics and estimating the corresponding contraction rates. 
Next, in Section \ref{sec:app}, we derive several new results concerning the robustness of the PD systems including the bounds on the long-term steady-state errors induced by various types of disturbances, inaccurate estimations, and multiple time-scale optimization. We dedicate Section \ref{sec:sim} to present a relevant power systems example where we present the AGC problem in a PD form then illustrate the error bounds derived in Section \ref{sec:app}.
Note that our results can apply to non-autonomous PD dynamics with general objective functions, while most recent related work deals with autonomous PD dynamics \cite{WangConvex} or quadratic objective functions \cite{SimpsonQuadratic}. 
The ISS analysis in \cite{CherukuriSaddle}, which characterizes error bounds to fixed saddle points, is relevant to the estimates derived in this work. However, we quantify errors relative to the time-varying  instantaneous optimum (Lemma \ref{lem:zpdzstar}), as well as those relative to the time-varying  trajectory induced by imperfect measurements (Lemma \ref{le:unobservable_to_pd}).



\section{strict contraction of PD dynamics} \label{sec:strongcontr}
Throughout this paper, we use $\|\cdot\|$ to denote the norm in which the considered system is strictly contracting. Similarly, $\mu(A)$ denotes the matrix measure of $A$ corresponding to the discussed norm. In particular, for the $2$-norm, one has $\mu(A) = \lambda_\mathrm{max}(A+A^T)/2$ where $\lambda_\mathrm{max}$ is the maximal eigenvalue. 

We proceed by reviewing the basics of contraction analysis for nonlinear dynamical systems. For holistic descriptions on this topic, see \cite{lohmiller1998contraction, slotine2003modular}. Let us consider a nonlinear dynamical system $\dot{z} = f(z, t)$ where $f$ is a continuous and sufficiently smooth function of the state variable $z$. The infinitesimal dynamics can be given as $\dot{\delta z} = \partial_z f \delta z$. Contraction analysis can characterize the dynamics of the distance between two close trajectories in some weighted two norm defined as: $\|\delta z\| = \|\delta y\|_2$ where we introduce a differential coordination transformation $\delta y = \Theta \delta z$ with an invertible metric transformation $\Theta$. The rate of change of the distance can be calculated accordingly $d \|\delta z\|^2/dt = 2\delta y^T F \delta y$ with $F = \dot{\Theta}\Theta^{-1} + \Theta \partial_z f \Theta^{-1}$ being the generalized Jacobian. Whenever the symmetric part of the generalized Jacobian is uniformly negative definite, i.e., there exists $\beta > 0$ s.t. $\mu(F) \leq -\beta < 0$, the system is (strictly) $\beta$-contracting and all trajectories will converge exponentially towards each other with a contraction rate larger or equal to $\beta$.

The basic contraction property of the primal dual dynamics with respect to the identity metric is simple to establish \cite{slotine2003modular}. Specifically, consider a virtual displacement between the two close trajectories characterized by the vector $\delta z= \left(\delta x, \delta \nu\right)$. These displacement vectors are described by:
\begin{subequations}
\begin{align} \label{eq:displacementdyn}
\delta  \dot{x} &= - H \delta x - E^T \delta \nu \\
\delta  \dot{\nu} &= E\delta x
\end{align}
\end{subequations}
with $H = \partial_{xx} f$ being the Hessian of the objective function. We further assume that the objective function is strictly convex in $x$, thus its corresponding Hessian is positive definite $H \succ 0$.
One can easily see that the primal-dual dynamics described by \eqref{eq:PD} is contracting with respect to the traditional Euclidean norm: $\|\delta z\|_2^2 =  \delta x^T \delta x + \delta \nu^T \delta \nu$:
\begin{equation} \label{eq:contraction-identity}
 \frac{d}{dt}\|\delta z\|_2^2 = - 2 \delta x^T H\delta x \leq 0
\end{equation}
Among other things, this result implies that the distance $r(z(t),z^\star) = \|z(t)-z^\star\|$ between the current point and any equilibrium satisfying the KKT conditions is a non-increasing function, a well-known result since the classical works on PD dynamics \cite{arrow1958studies}. Moreover, it establishes the connection between this original result and more recent approach to PD systems via Krasovskii type Lyapunov functions \cite{Paganini}. Indeed, the existence of a Krasovskii Lyapunov function implies contraction of the system, which in turn verifies that the distance between any point and any equilibrium only decreases.

However, this system is not strictly contracting; there may be close trajectories that don't eventually get closer to each other in Euclidean metrics. This is indeed the case for trajectories with the same $x$ but different $\nu$. Moreover, whenever the matrix $E$ is not full row rank, the optimum of the original system may not be unique, and the system may converge to different equilibria. In this case, there is no strict contraction because the distance between two trajectories starting from different equilibria does not change. However, for systems with full rank $E$ the question arises, whether strict contraction of the trajectories can be established in some other metric. 

\subsection{Strict contraction of PD dynamics}
In the following we shall develop a metric which certifies strict contraction of PD systems. Consider a coordinate transformation $\delta y = \Theta \delta z$ with invertible ``skew'' metric transformation $\Theta$:
\begin{equation}
    \Theta = \begin{bmatrix}
    I & \alpha E^T\\
    0 & (I-\alpha^2 E E^T)^{\frac{1}{2}}
    \end{bmatrix}
\end{equation}
where $I$ denotes the identity matrix of appropriate dimensions. 
Then, for the two neighbour trajectories with the virtual displacement $\delta z$, we introduce the distance $\|\delta z\|_\alpha^2 = \delta y^T \delta y = \delta z^T\, \Theta^T\Theta \,\delta z$. Following the arguments from the previous sections we arrive at the generalized Jacobian $F = -\Theta^{-T} Q \Theta^{-1}$ with
\begin{equation}
    Q = \begin{bmatrix}
    H - \alpha E^TE & \frac{\alpha}{2} EH \\
    \frac{\alpha}{2} H E^T & \alpha EE^T
    \end{bmatrix}
\end{equation}
Theorem below establishes sufficient conditions for strict contraction of the primal-dual algorithm in this metric.
\begin{lemma} \label{lem:strong_alpha}
If $\alpha$ satisfies
\begin{equation}\label{eq:alphacondition}
      0 < \alpha < \frac{1}{\max\left\{\|E\|_2, \|EH^{-1/2}\|_2^2 + \|H\|_2/4\right\}}
 \end{equation}
the primal-dual system defined in \eqref{eq:PD} is strictly contracting in metric $\Theta$, with rate $\beta = \mu_2\left(-\Theta^{-T}Q \Theta^{-1}\right)$.
 \end{lemma}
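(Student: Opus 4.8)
The plan is to reduce the whole statement to the single semidefinite inequality $Q \succ 0$. First I would see what \eqref{eq:alphacondition} buys us on the side of $\Theta$: since $\|E\|_2$ is one of the terms in the maximum, the hypothesis forces $\alpha\|E\|_2 < 1$, hence $I - \alpha^2 EE^{T} \succ 0$, so the square root in $\Theta$ is real and invertible and $\Theta$ is invertible; a block multiplication moreover gives $\Theta^{T}\Theta$ equal to the block matrix with identity diagonal blocks and off-diagonal blocks $\alpha E^{T}$ and $\alpha E$, which is therefore positive definite, so $\|\cdot\|_\alpha$ is a genuine norm and $\delta y = \Theta\,\delta z$ is an admissible differential coordinate change. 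As already computed in the excerpt, the generalized Jacobian is then $F = -\Theta^{-T}Q\,\Theta^{-1}$ (with $\dot\Theta = 0$ because $E$, hence $\Theta$, is constant). Since $Q$ is symmetric so is $F$, whence $\mu_2(F) = \lambda_{\max}(F) = -\lambda_{\min}(\Theta^{-T}Q\,\Theta^{-1})$, and by congruence $\Theta^{-T}Q\,\Theta^{-1} \succ 0$ iff $Q \succ 0$; in that case $\mu_2(F) < 0$ and the contraction rate is $-\mu_2(F) = \lambda_{\min}(\Theta^{-T}Q\,\Theta^{-1}) > 0$, as in the statement. So everything comes down to showing $Q \succ 0$ under \eqref{eq:alphacondition}.

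For that I would take a Schur complement of $Q$ with respect to its lower-right block $\alpha EE^{T}$, which is positive definite because $\alpha > 0$ and $E$ has full row rank (the standing assumption under which strict contraction is being claimed — recall the remarks preceding the lemma about rank-deficient $E$). Then $Q \succ 0$ is equivalent to positivity of the Schur complement
\[
S \;=\; H - \alpha E^{T}E - \tfrac{\alpha}{4}\,HE^{T}(EE^{T})^{-1}EH .
\]
Conjugating by $H^{-1/2}$ (legitimate since $H \succ 0$), $S \succ 0$ is equivalent to $I \succ \alpha\,H^{-1/2}E^{T}EH^{-1/2} + \tfrac{\alpha}{4}\,H^{1/2}\Pi H^{1/2}$, where $\Pi = E^{T}(EE^{T})^{-1}E$ is the orthogonal projector onto $\mathrm{range}(E^{T})$. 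Bounding the right-hand side in the $2$-norm: the first summand has norm $\alpha\,\|EH^{-1/2}\|_2^{2}$ (it is $\alpha(EH^{-1/2})^{T}(EH^{-1/2})$), and since $0 \preceq \Pi \preceq I$ we have $H^{1/2}\Pi H^{1/2} \preceq H$, so the second summand has norm at most $\tfrac{\alpha}{4}\|H\|_2$. Hence the right-hand side is $\preceq \alpha\big(\|EH^{-1/2}\|_2^{2} + \|H\|_2/4\big) I$, which is $\prec I$ precisely when $\alpha < 1/\big(\|EH^{-1/2}\|_2^{2} + \|H\|_2/4\big)$ — the second term of the maximum in \eqref{eq:alphacondition} (the first term having already been used for invertibility of $\Theta$). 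This yields $S \succ 0$, hence $Q \succ 0$, and closes the argument.

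The one genuinely nontrivial point is this last estimate: recognizing that the ungainly term $HE^{T}(EE^{T})^{-1}EH$ in the Schur complement becomes, after the $H^{1/2}$-conjugation, $H^{1/2}\Pi H^{1/2}$ with $\Pi$ a projector, so that it is dominated simply by $H$; this is what makes the coefficient collapse to the clean $\|H\|_2/4$ rather than to something involving $\|E\|_2$ or the conditioning of $EE^{T}$. An alternative route that postpones the full-rank hypothesis is to expand $\delta z^{T}Q\,\delta z$ directly, write the cross term $\alpha\,\delta x^{T}HE^{T}\delta\nu$ as the inner product of $H^{1/2}\delta x$ and $H^{1/2}E^{T}\delta\nu$, and absorb it by Young's inequality with a free weight; optimizing the weight at $\|H\|_2/2$ is exactly what produces the factor $1/4$, the term $\|EH^{-1/2}\|_2^{2}$ enters through $\|E\,\delta x\|_2^{2} \le \|EH^{-1/2}\|_2^{2}\,\delta x^{T}H\,\delta x$, and full row rank of $E$ is invoked only at the end to upgrade the resulting $Q \succeq 0$ to $Q \succ 0$.
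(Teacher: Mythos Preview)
Your argument is correct, but the paper's route is more elementary: instead of taking the Schur complement of $Q$, it expands $\tfrac{d}{dt}\|\delta z\|_\alpha^2 = -2\,\delta z^{T}Q\,\delta z$ directly and completes the square in $\delta\nu$,
\[
-2\,\delta z^{T}Q\,\delta z \;=\; -2\,\delta x^{T}\!\Big(H-\alpha E^{T}E-\tfrac{\alpha}{4}H^{2}\Big)\delta x \;-\; 2\alpha\,\big\|E^{T}\delta\nu+\tfrac{1}{2}H\,\delta x\big\|_2^{2},
\]
and then checks that $H-\alpha E^{T}E-\tfrac{\alpha}{4}H^{2}\succ 0$ under \eqref{eq:alphacondition} (your $H^{-1/2}$-conjugation argument verifies this step, which the paper leaves implicit). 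This is essentially the ``alternative route'' you sketch at the end, with the weight fixed at $1/2$ rather than optimized. Compared with your Schur-complement approach, the paper's decomposition avoids inverting $EE^{T}$ and therefore does not need the full-row-rank hypothesis at the algebraic level; that hypothesis only enters at the very end to ensure the squared term cannot vanish when $\delta x=0$, $\delta\nu\neq 0$. Your route, on the other hand, produces the sharper intermediate matrix $H-\alpha E^{T}E-\tfrac{\alpha}{4}H\Pi H$ (with $\Pi$ the projector onto $\mathrm{range}(E^{T})$) and then throws away the gain via $\Pi\preceq I$; so the two approaches coincide exactly at the level of the final bound but your Schur complement would in principle allow a tighter $\alpha$-range if one were willing to keep $\Pi$ explicit.
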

\begin{proof} The condition $\alpha < 1/\|E\|$ ensures not only that the metric transformation $\Theta$ is invertible but also the distance $\|\delta z\|_\alpha^2$ is a positive definite form:
\begin{align}
    \|\delta z\|_\alpha^2 
    &= \delta z^T \Theta^T \Theta \delta z \nonumber\\ 
    &=\|\delta x + \alpha E^T\delta\nu\|^2_2 + \delta \nu^T (1-\alpha^2 E E^T)\delta \nu. \label{eq:quadform}
\end{align}
The rate of change of this form, according to \eqref{eq:PD} is given by
\begin{align}
    &\frac{d}{dt}\|\delta z\|_\alpha^2 \nonumber\\
    &=-2\delta x^T (H - \alpha E^T E)\delta  x -  2\alpha\delta \nu^T EH\delta x  -2\alpha \delta \nu^T E E^T\delta \nu \nonumber\\
    &= -2\delta x^T (H - \alpha E^T E - \frac{\alpha}{4} H^2)\delta  x -2\alpha \|E^T\delta \nu + \frac{1}{2}H \delta x\|^2_2 \nonumber
\end{align}
Matrix $H - \alpha E^T E - \frac{\alpha}{4} H^2$ is positive definite whenever $\alpha$ satisfies \eqref{eq:alphacondition}, and the system is therefore strictly contracting. For the considered system, the rate of change of the distance $\|\delta z\|_\alpha^2 = \|\Theta \delta z\|_2^2$ is given by $\frac{d}{dt} \|\delta z\|_\alpha^2 = - 2\,\delta z^T Q \delta z$, $Q \succ 0$. The guaranteed contraction rate is given by $\beta = \lambda_{\textrm{min}}\left(\Theta^{-T}Q \Theta^{-1}\right)$ which is easily proved by noting that $-\delta z^T Q \delta z  = - (\Theta \delta z)^T \left(\Theta^{-T}Q \Theta^{-1}\right) (\Theta \delta  z) \leq -  \lambda_{\textrm{min}}\left(\Theta^{-T}Q \Theta^{-1}\right) \, \|\Theta \delta z\|_2^2$.
\end{proof}
Note that, for linear and autonomous systems, the metric $\Theta^T \Theta$ above is equivalent to the strict quadratic Lyapunov functions for PD dynamics presented in \cite{SimpsonQuadratic}. However, the main advantage of our approach stems from the fact that contraction analysis applies to a more general class of systems which can be non-autonomous. Many important classes of problems are non-autonomous in nature including the optimization framework considered in this work.

\section{Applications} \label{sec:app}
While the PD dynamics is an extremely flexible framework applicable to a broad variety of continuous time optimization problems, its perfect realization is not viable in most of the practical situations. In this section, we analyze stability and performance of quasi-PD systems that approximate the ``true'' PD dynamics. Throughout the section, we adopt a number of assumptions and definitions reviewed below. We consider a system characterized by the Lagrangian $\ell(x,\nu,t)$ of the form \eqref{eq:L} and the ``true'' PD dynamics expressed compactly as 
\begin{equation} \label{eq:truePD}
 \dot{z}^{\mathrm{pd}} = f(z^{\mathrm{pd}},t)
\end{equation}
where $z^{\mathrm{pd}} = (x^\mathrm{pd}, \nu^{\mathrm{pd}})$. We assume that the system is contracting with respect to the uniform metric associated with the norm $\|z\| = \|\Theta z\|_2$ with some constant matrix $\Theta$ as presented in Section \ref{sec:strongcontr}. We assume that the system is strictly $\beta$-contracting. 

While many of the results can be easily extended to a more general case of non-uniform metric $\Theta$ explicitly depending on $z$, for the sake of simplicity we restrict the discussion only to the uniform case. In the following, we show that the contraction rate with respect to such a metric can be naturally used to characterize the performance of more realistic approximately primal-dual systems.

In many practical situations, the continuous time PD algorithm is utilized in a non-stationary setting where the system is subject to constantly changing external conditions. In this case, the PD dynamics allows the system to track the optimal operating condition which also changes in time. For example, in power system context, the secondary frequency control can keep the system close to the optimal power flow solutions as the external factors such as the load power consumption or renewable generation change. Typically there is a time-scale separation between the fast PD dynamics and slow changes of external parameters. In this case, the deviations from the optimum are small enough and can be safely ignored. However, in a more general context establishing rigorous bounds on the deviations from the optimum is essential for certifying the safety and performance of the systems.

In the following, we assume, that the Lagrangian $\ell(x,\nu,t)$ is explicitly dependent on time, and characterize this dependence implicitly through the position of the instantaneous optimum $z^\star(t) = (x^\star(t), \nu^\star(t))$. Also, we explicitly assume that the rate of change $\|\dot{z}^\star\|$ of such instantaneous optimum is bounded. Then strict contraction of the PD dynamics provides us with a natural for quantification of the deviations from the optimum.



\begin{lemma} \label{lem:zpdzstar}
Consider now the distance $r(t) = \|z^\mathrm{pd}(t)-z^\star(t)\|$ between the state $z^\mathrm{pd}(t)$ and the instant optimum $z^\star(t)$. This distance satisfies the following differential inequality:
\begin{equation}
 \dot r \leq - \beta r + \|\dot{z}^\star\|
\end{equation}
\begin{proof}
This result is proven by direct differentiation of $r^2(t)$ and application of the contraction property. The term $\beta r $ represents the contraction of the fixed equilibrium system, while the term  $\|\dot{z}^\star\|$ accounts for the motion of the instantaneous equilibrium point $z^\star(t)$.
\end{proof}
\end{lemma}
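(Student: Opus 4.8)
The plan is to differentiate $r^2(t) = \|z^{\mathrm{pd}}(t)-z^\star(t)\|^2 = \|\Theta\bigl(z^{\mathrm{pd}}(t)-z^\star(t)\bigr)\|_2^2$ and split its time derivative into a term governed by the PD vector field, which the $\beta$-contraction property controls, and a term caused by the drift of the instantaneous optimum. Writing $w(t) = \Theta\bigl(z^{\mathrm{pd}}(t)-z^\star(t)\bigr)$ and using that $\Theta$ is the constant metric of Section~\ref{sec:strongcontr}, we get $\tfrac{d}{dt} r^2 = 2 w^T \Theta\bigl(\dot z^{\mathrm{pd}} - \dot z^\star\bigr)$. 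Since $z^\star(t)$ is the instantaneous optimum, it satisfies the KKT conditions of the frozen-time problem, i.e. it is an equilibrium of $\dot z = f(z,t)$, so $f(z^\star(t),t) = 0$. Hence $\dot z^{\mathrm{pd}} - \dot z^\star = \bigl[f(z^{\mathrm{pd}},t) - f(z^\star,t)\bigr] - \dot z^\star$ and
\[
 \tfrac{d}{dt} r^2 = 2 w^T \Theta\bigl[f(z^{\mathrm{pd}},t)-f(z^\star,t)\bigr] - 2 w^T \Theta \dot z^\star .
\]

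For the first term I would pass from the infinitesimal contraction estimate to a finite-distance one by integrating the Jacobian along the segment $z_s = z^\star + s(z^{\mathrm{pd}} - z^\star)$, $s\in[0,1]$: by the fundamental theorem of calculus $f(z^{\mathrm{pd}},t)-f(z^\star,t) = \int_0^1 \partial_z f(z_s,t)(z^{\mathrm{pd}}-z^\star)\,ds$, so that $w^T\Theta\bigl[f(z^{\mathrm{pd}},t)-f(z^\star,t)\bigr] = \int_0^1 w^T\bigl(\Theta\,\partial_z f(z_s,t)\,\Theta^{-1}\bigr) w\,ds \le \int_0^1 \mu_2\bigl(\Theta\,\partial_z f(z_s,t)\,\Theta^{-1}\bigr)\|w\|_2^2\,ds \le -\beta r^2$, the last step being exactly the strict $\beta$-contraction hypothesis applied at each point of the segment (the generalized Jacobian reduces to $\Theta\,\partial_z f\,\Theta^{-1}$ because $\Theta$ is constant). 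For the second term, Cauchy--Schwarz in the $2$-norm gives $-2 w^T\Theta\dot z^\star \le 2\|w\|_2\,\|\Theta\dot z^\star\|_2 = 2 r\,\|\dot z^\star\|$, using the paper's convention $\|\cdot\| = \|\Theta\,\cdot\,\|_2$. Combining, $\tfrac{d}{dt} r^2 \le -2\beta r^2 + 2 r\,\|\dot z^\star\|$; dividing by $2r$ at points where $r>0$ and recalling $\tfrac{d}{dt} r^2 = 2 r\dot r$ yields $\dot r \le -\beta r + \|\dot z^\star\|$.

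The main obstacle is the rigorous bookkeeping around the finite-versus-infinitesimal contraction step and the regularity of $r(t)$. One must know that the $\beta$-contraction bound $\mu_2\bigl(\Theta\,\partial_z f(\cdot,t)\,\Theta^{-1}\bigr)\le -\beta$ holds uniformly in the state along the whole connecting segment (guaranteed by the standing assumption of this section that the system is strictly $\beta$-contracting in the constant metric $\Theta$, e.g. when $H$ is uniformly positive definite on a convex region containing the trajectory), and that $z^\star(t)$ is absolutely continuous so $\dot z^\star$ exists almost everywhere and $r(t)$ is differentiable almost everywhere — both subsumed in the hypothesis that $\|\dot z^\star\|$ is bounded. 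At the (measure-zero) set where $r=0$ the inequality is read in the sense of the upper Dini derivative, as is standard for such comparison lemmas. Everything else is a routine computation.
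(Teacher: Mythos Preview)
Your proof is correct and follows exactly the approach the paper sketches: direct differentiation of $r^2(t)$ in the $\Theta$-metric, splitting into a contraction term (handled via the segment integral and the uniform bound $\mu_2(\Theta\,\partial_z f\,\Theta^{-1})\le -\beta$) and a drift term (handled by Cauchy--Schwarz). The paper's own proof is a two-line outline; you have simply filled in the details it omits, including the finite-versus-infinitesimal bookkeeping and the Dini-derivative caveat at $r=0$.
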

\begin{corollary} \label{co:pd_to_star}
In the steady state $t\to \infty$, whenever the rate of equilibrium point movement is bounded the ``true'' PD system above is confined to the ball 
\begin{equation}\label{eq:zzs}
\|z^\mathrm{pd}(t) - z^\star(t)\| \leq \frac{1}{\beta}\sup_{t}\|\dot{z}^\star\|
\end{equation}
\end{corollary}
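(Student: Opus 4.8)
The plan is to treat the scalar inequality furnished by Lemma~\ref{lem:zpdzstar} as a linear differential inequality, integrate it by the standard comparison lemma, and then let $t\to\infty$. Write $d := \sup_{t}\|\dot z^\star(t)\|$, which is finite by the standing assumption that the instantaneous optimum moves at a bounded rate. Since $\|\dot z^\star(s)\|\le d$ for every $s$, Lemma~\ref{lem:zpdzstar} gives the autonomous bound $\dot r(t)\le -\beta r(t)+d$.

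Next I would multiply by the integrating factor $e^{\beta t}$, obtaining $\tfrac{d}{dt}\!\left(e^{\beta t}r(t)\right)\le d\,e^{\beta t}$, and integrate from an arbitrary initial time $t_0$ to $t$:
\[
 r(t)\ \le\ e^{-\beta(t-t_0)}\,r(t_0)\ +\ \frac{d}{\beta}\Bigl(1-e^{-\beta(t-t_0)}\Bigr).
\]
Because $\beta>0$, the first term decays exponentially and the bracket converges to $1$ as $t\to\infty$, so $\limsup_{t\to\infty} r(t)\le d/\beta$, which is exactly \eqref{eq:zzs}. The displayed inequality in fact shows a bit more: the distance is at all times bounded by the sum of an exponentially forgotten initial error and the steady-state ball radius $d/\beta$, so the transient is controlled as well.

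I do not expect a genuine obstacle here; the one point deserving a word rather than work is regularity. The quantity $r(t)=\|z^\mathrm{pd}(t)-z^\star(t)\|$ need not be differentiable where it vanishes, so $\dot r$ must be read as the upper right-hand (Dini) derivative — which is how Lemma~\ref{lem:zpdzstar} is already established, by differentiating the smooth quantity $r^2$ and dividing by $2r$ away from $r=0$, the inequality holding trivially at $r=0$. The comparison lemma is valid for Dini derivatives, so the integration step above goes through unchanged, and the remainder is a routine Gr\"onwall-type estimate.
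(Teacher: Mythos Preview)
Your argument is correct and is exactly the standard Gr\"onwall/comparison step one expects here; the paper itself offers no separate proof of this corollary, treating it as an immediate consequence of the differential inequality in Lemma~\ref{lem:zpdzstar}. Your remark on reading $\dot r$ as a Dini derivative is a nice bit of hygiene that the paper omits.
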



\subsection{Robustness of PD systems}
Next, we consider the systems that deviate from the ``true'' PD dynamics. Our primary motivation is the multi-scale optimization system where the decisions are made by different layers on multiple time-scales (for example, see \cite{del2013contraction, bousquet2015contraction}). It is common in this setting for the higher layers of the hierarchy to have limited observability of the lower layer states. Most commonly, the optimization logic on higher layers either relies on imperfect observations or assumes that the faster lower layers have already equilibrated. To model this setting, we represent the approximate primal-dual dynamics by
\begin{equation}
 \dot{z} = f(z,t) + d(z,t)
\end{equation}
where $d(z,t) = f(\hat{z}, t) - f(z, t)$ represents the substitution of true signal $z = (z_\mathrm{o}, z_\mathrm{u})$ with its estimate $\hat{z}= (z_\mathrm{o}, \hat{z}_\mathrm{u})$. In this section we assume that the function $f$ is Lipschitz, so that the following two inequalities hold: 
\begin{align}
  \|d(z,t)\| = \|  f(\hat{z}, t) - f(z, t)\| &\leq  \xi\|\hat{z} - z\| \label{eq:lip1}\\
 \|f(z,t)\|  = \|f(z,t) - f(z^\star,t)\| &\leq \eta \|z-z^\star\| \label{eq:lip2}
\end{align}

We now prove the following intermediate lemma (see also \cite{slotine1991applied} and well as \cite{bousquet2015contraction}).
\begin{lemma}\label{le:unobservable_to_pd}
In steady-state, the distance $ r^\mathrm{pd}(t) = \|z(t) - z^\mathrm{pd}(t)\| $ between the approximate and true primal-dual dynamics systems satisfies the differential inequality:
\begin{equation} \label{eq:distance_to_true}
 \dot{r}^\mathrm{pd}\leq -\beta r^\mathrm{pd} + \|d\|
\end{equation}
\begin{proof}
Consider a trajectory $\tilde{z}(t)$ following the original flow, i.e. satisfying $\dot{\tilde{z}} = f(\tilde{z},t)$ and starting at time $t$ at $z(t)$, i.e. $\tilde{z}(t) = z(t)$. Trajectory $\tilde{z}(t)$ together with $z^{\mathrm{pd}}(t)$ can be considered as two individual trajectories of the same system which is contracting the at the rate of $\beta$. Strict contraction of the original system implies that the distance between these trajectories will decrease as:
\begin{equation} \label{eq:ztzpd}
\|\tilde{z}(t+dt) - z^\mathrm{pd}(t+dt)\| \leq  (1-\beta dt)\|z(t)-z^\mathrm{pd}(t)\|
\end{equation} 
where we have utilized the fact that $z(t) = \tilde{z}(t)$ at time $t$. At the same time, the distance between
$\tilde{z}(t)$ and $z(t)$ in the same interval has increased by at most $dt \|d(z(t),t)\|$. Therefore, we have that
\begin{equation} \label{eq:ztz}
 \|\tilde{z}(t+dt) - z(t+dt)\| \leq \|d(z(t),t)\| \, dt.
\end{equation}

Combining \eqref{eq:ztzpd} and \eqref{eq:ztz} via triangle inequality: $\|\tilde{z}(t+dt) - z(t+dt)\| - \|z(t+dt) - z^\mathrm{pd}(t+dt)\| \leq \|\tilde{z}(t+dt) - z^\mathrm{pd}(t+dt)\|$, and taking the limit $dt \to 0$, yields \eqref{eq:distance_to_true}.
\end{proof}
\end{lemma}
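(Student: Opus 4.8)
The plan is to bound the upper Dini derivative of $r^{\mathrm{pd}}(t)$ directly, working with the squared distance in the contraction metric rather than through the infinitesimal comparison trajectory. Set $e(t) = z(t) - z^{\mathrm{pd}}(t)$; subtracting \eqref{eq:truePD} from $\dot z = f(z,t) + d(z,t)$ gives $\dot e = \left(f(z,t) - f(z^{\mathrm{pd}},t)\right) + d(z,t)$. Since the working metric is the uniform one with $\|v\| = \|\Theta v\|_2$ and $\Theta$ constant, I would differentiate $\|e\|^2 = e^T\Theta^T\Theta e$ to obtain
\begin{equation*}
\frac{d}{dt}\|e\|^2 = 2\,e^T\Theta^T\Theta\left(f(z,t) - f(z^{\mathrm{pd}},t)\right) + 2\,e^T\Theta^T\Theta\,d(z,t),
\end{equation*}
where the first term is controlled by strict contraction and the second by Cauchy--Schwarz. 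Note this statement does not need the Lipschitz bounds \eqref{eq:lip1}--\eqref{eq:lip2}, only the quantity $\|d\|$ evaluated along the actual trajectory.

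For the contraction term, the key is to promote the infinitesimal condition $\mu_2\left(\Theta(\partial_z f)\Theta^{-1}\right) \le -\beta$ to a finite-increment bound. That condition is equivalent to $\Theta^T\Theta\,\partial_z f + (\partial_z f)^T\Theta^T\Theta \preceq -2\beta\,\Theta^T\Theta$ pointwise. Writing the increment along the segment joining $z^{\mathrm{pd}}$ to $z$, namely $f(z,t) - f(z^{\mathrm{pd}},t) = \left(\int_0^1 \partial_z f(z^{\mathrm{pd}} + s\,e,\,t)\,ds\right)e$, and substituting, the quadratic form integrates to $e^T\Theta^T\Theta\left(f(z,t) - f(z^{\mathrm{pd}},t)\right) \le -\beta\|e\|^2$. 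The disturbance term is estimated in the transformed coordinates, $e^T\Theta^T\Theta\,d = (\Theta e)^T(\Theta d) \le \|\Theta e\|_2\,\|\Theta d\|_2 = \|e\|\,\|d\|$. Combining, $\frac{d}{dt}\|e\|^2 \le -2\beta\|e\|^2 + 2\|e\|\,\|d\|$, and dividing by $2\|e\|$ on the set where $e\neq 0$ (and reading $\dot{r}^{\mathrm{pd}}$ as an upper Dini derivative elsewhere) yields \eqref{eq:distance_to_true}.

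The main obstacle I expect is precisely this finite-increment step: passing from $\mu_2(F) \le -\beta$ for the Jacobian to $e^T\Theta^T\Theta\left(f(z,t) - f(z^{\mathrm{pd}},t)\right) \le -\beta\|e\|^2$ for the nonlinear flow. It relies on the fundamental-theorem-of-calculus representation of the increment, which needs $f(\cdot,t)$ differentiable (or at least that the a.e.\ derivative of the Lipschitz map be usable along the segment), and on the uniformity of $\Theta$, since a state-dependent $\Theta$ would reintroduce a $\dot\Theta\Theta^{-1}$ term and make the segment argument more delicate. A minor technical point is the non-smoothness of $t\mapsto\|e(t)\|$ at its zeros, which is standard and handled with Dini derivatives throughout. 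An equivalent route --- essentially the geometric argument given above --- avoids differentiating $\|e\|$ altogether: introduce the auxiliary trajectory $\tilde z$ following the true flow with $\tilde z(t) = z(t)$, bound $\|\tilde z(t+dt) - z^{\mathrm{pd}}(t+dt)\| \le (1 - \beta\,dt)\|e(t)\|$ by contraction and $\|\tilde z(t+dt) - z(t+dt)\| \le \|d(z(t),t)\|\,dt$ by integrating the defect, and close with the triangle inequality; both routes give the same bound.
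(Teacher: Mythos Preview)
Your argument is correct. Your primary route, however, differs from the paper's: you differentiate $\|e\|^2$ in the $\Theta$-metric and use the integral representation $f(z,t)-f(z^{\mathrm{pd}},t)=\bigl(\int_0^1\partial_z f(z^{\mathrm{pd}}+se,t)\,ds\bigr)e$ together with the pointwise matrix inequality $\Theta^T\Theta\,\partial_z f+(\partial_z f)^T\Theta^T\Theta\preceq -2\beta\,\Theta^T\Theta$ to control the drift term, then Cauchy--Schwarz for the disturbance. The paper instead uses exactly the auxiliary-trajectory construction you mention at the end: launch $\tilde z$ along the true flow from $z(t)$, invoke trajectory-level contraction for $\|\tilde z-z^{\mathrm{pd}}\|$, bound $\|\tilde z-z\|$ by the integrated defect, and apply the triangle inequality before sending $dt\to 0$. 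Your direct approach is cleaner analytically and makes the role of the constant metric $\Theta$ explicit, but it does need the segment integral and hence the differentiability you flag; the paper's geometric argument works directly at the level of finite trajectory separations and sidesteps that smoothness question, at the cost of being slightly more informal in the $dt\to 0$ passage. Both yield the same inequality, and your identification of the Dini-derivative caveat at $e=0$ is a refinement the paper leaves implicit.
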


\begin{corollary} \label{co:appx_to_pd}
After exponential transients at rate $\beta > 0$, the distance between the non-ideal and ideal PD can be bounded as 
\begin{equation} \label{eq:zzpd}
\|z(t) - z^\mathrm{pd}(t)\| \leq \frac{\xi}{\beta}\sup_t\|\hat{z}(t)-z(t)\|
\end{equation}
\end{corollary}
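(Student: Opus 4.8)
The plan is to combine the differential inequality of Lemma~\ref{le:unobservable_to_pd} with the Lipschitz estimate \eqref{eq:lip1} and then invoke a standard comparison (Gr\"onwall) argument. First I would observe that \eqref{eq:lip1} bounds the perturbation term uniformly in time by a constant: $\|d(z(t),t)\| \le \xi\|\hat{z}(t)-z(t)\| \le \xi\sup_{s}\|\hat{z}(s)-z(s)\| =: c$. Substituting this into \eqref{eq:distance_to_true} reduces the problem to the autonomous scalar differential inequality $\dot{r}^\mathrm{pd} \le -\beta r^\mathrm{pd} + c$.

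Next I would apply the comparison lemma to this inequality: any nonnegative $r^\mathrm{pd}$ satisfying it obeys $r^\mathrm{pd}(t) \le e^{-\beta(t-t_0)} r^\mathrm{pd}(t_0) + \frac{c}{\beta}\left(1 - e^{-\beta(t-t_0)}\right)$ for all $t \ge t_0$. As $t\to\infty$ the first term decays exponentially at rate $\beta$, while the second converges monotonically to $c/\beta$; this decay of the initial condition is precisely what is meant by ``after exponential transients at rate $\beta$''. Taking the $\limsup$ then yields $\limsup_{t\to\infty}\|z(t)-z^\mathrm{pd}(t)\| \le c/\beta = \frac{\xi}{\beta}\sup_t\|\hat{z}(t)-z(t)\|$, which is \eqref{eq:zzpd}.

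There is essentially no hard step here; the estimate is a direct consequence of the already-established Lemma~\ref{le:unobservable_to_pd} together with linearity of the comparison ODE. The only points requiring a little care are that $\|d\|$ is genuinely bounded uniformly in $t$ (which needs the observation error $\|\hat{z}-z\|$ to have a finite supremum, as is implicit in the statement), and that the inequality of Lemma~\ref{le:unobservable_to_pd} is read in the Dini-derivative sense so the comparison lemma still applies at points where $r^\mathrm{pd}$ fails to be differentiable. With those caveats the bound follows immediately.
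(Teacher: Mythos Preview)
Your proposal is correct and follows essentially the same approach as the paper: the paper also passes from Lemma~\ref{le:unobservable_to_pd} to the steady-state bound $\|z-z^{\mathrm{pd}}\|\le \sup_t\|d\|/\beta$ and then invokes \eqref{eq:lip1}. Your write-up is simply more explicit about the comparison/Gr\"onwall step and the Dini-derivative caveat, which the paper leaves implicit.
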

\begin{proof}
From Lemma \ref{le:unobservable_to_pd} one has the following when the system settles down: $\|z(t) - z^\mathrm{pd}(t)\| \leq \sup_t\|d(z(t),t)\|/\beta$. Combining this relation and the inequality \eqref{eq:lip1}, yields \eqref{eq:zzpd}
\end{proof}

Next, we consider a setting where the actual system does not follow exactly the primal-dual dynamics, although the PD system is a reasonable approximation. In practice, this situation can occur for a variety of reasons, for example, due to fast degrees of freedom in plant dynamics lacking the PD structure, or due to imperfect observers introducing additional delays in the system. While both settings can be modeled in the same way, for exposition purposes we restrict the discussion only to the case of imperfect observers. We assume, that the subset of directly unobservable variables $z_\mathrm{u}$ is estimated by a separate observer system $\dot{\hat{z}}_u = f_\mathrm{u}(\hat{z}_\mathrm{u}, z_\mathrm{u})$ that satisfies the following conditions:
\begin{enumerate}
 \item A subset of the observer states, $\hat{z}_\mathrm{u}$ is an asymptotically unbiased estimate of $z_\mathrm{u}$, so that for constant $z_\mathrm{u}$, the observer converges to the manifold $f_\mathrm{u}(\hat{z}_\mathrm{u}, z_\mathrm{u}) = 0$ satisfying $\hat{z}_\mathrm{u} = z_\mathrm{u}$.
 \item Dynamics of the observer is partially contracting with respect to $\hat{z}_\mathrm{u}$ with a contraction rate of $\hat{\beta}$. 
\end{enumerate}

The following formal results allow us to characterize the performance of the system.
\begin{lemma} \label{lem:zhatbounded}
Whenever  $\hat{\beta} > \xi$ and $\|z-z^\star\|$ is upper bounded, the long-term steady state error of the observer is bounded by
\begin{equation} \label{eq:hatzz}
 \|\hat{z} - z\| \leq \frac{\eta}{\hat{\beta} -\xi} \sup_t \|z-z^\star\|
\end{equation}
\begin{proof}
Applying Lemma \ref{lem:zpdzstar} and the Lipschitz bounds we obtain
\begin{subequations}
\begin{align}
 \|\hat{z} - z\| &\leq  \frac{1}{\hat{\beta}} \sup_t \|\dot{z}_\mathrm{u}\| \label{eq:y_to_z_a}\\
 &\leq \frac{1}{\hat{\beta}}\sup_t\left(\|f(z,t)\| + \|d(t)\|\right) \label{eq:y_to_z_b} \\
 &\leq \frac{1}{\hat{\beta}}\sup_t\left( \eta \|z - z^\star\| +\xi\|\hat{z} - z\| \label{eq:y_to_z_c}\right)
\end{align}
\end{subequations}
where we have used the relation $\|\dot{z}_\mathrm{u}\| \leq \|\dot{z}\| \leq \|f(z,t)\| + \|d(t)\|$ to arrive at \eqref{eq:y_to_z_b} from \eqref{eq:y_to_z_a}. Solving for $\sup_t \|\hat{z}-z\|$ yields \eqref{eq:hatzz}.
\end{proof}
\end{lemma}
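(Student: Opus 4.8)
The plan is to treat the observer error in exactly the way Lemma~\ref{lem:zpdzstar} and Corollary~\ref{co:pd_to_star} treat the deviation from a moving optimum, with the unobserved true state $z_\mathrm{u}$ playing the role of the ``instantaneous optimum'' for the partially contracting observer dynamics $\dot{\hat z}_\mathrm{u} = f_\mathrm{u}(\hat z_\mathrm{u}, z_\mathrm{u})$. By assumption~(1), for frozen $z_\mathrm{u}$ the observer converges to $\hat z_\mathrm{u} = z_\mathrm{u}$, and by assumption~(2) it contracts at rate $\hat\beta$; hence applying Lemma~\ref{lem:zpdzstar} with $(\hat\beta,\hat z_\mathrm{u}, z_\mathrm{u})$ in place of $(\beta, z^\mathrm{pd}, z^\star)$ yields $\dot r^{\mathrm{obs}} \le -\hat\beta\, r^{\mathrm{obs}} + \|\dot z_\mathrm{u}\|$ for $r^{\mathrm{obs}} = \|\hat z_\mathrm{u} - z_\mathrm{u}\| = \|\hat z - z\|$, and, after exponential transients at rate $\hat\beta$, the steady-state bound $\|\hat z - z\| \le \tfrac{1}{\hat\beta}\sup_t\|\dot z_\mathrm{u}\|$.

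Next I would bound the driving term $\|\dot z_\mathrm{u}\|$. Since $z_\mathrm{u}$ is a sub-block of $z$ and the working norm restricts to a compatible norm on that sub-block, $\|\dot z_\mathrm{u}\| \le \|\dot z\|$; the approximate-PD equation $\dot z = f(z,t) + d(z,t)$ together with the triangle inequality then gives $\|\dot z\| \le \|f(z,t)\| + \|d(z,t)\|$. Substituting the two Lipschitz estimates \eqref{eq:lip1}--\eqref{eq:lip2}, i.e. $\|f(z,t)\| \le \eta\|z - z^\star\|$ and $\|d(z,t)\| \le \xi\|\hat z - z\|$, produces
\[
 \|\hat z - z\| \;\le\; \frac{1}{\hat\beta}\Big(\eta\,\|z - z^\star\| \;+\; \xi\,\|\hat z - z\|\Big).
\]

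The last step is to close this self-referential inequality. Taking $\sup_t$ on both sides gives $\sup_t\|\hat z - z\| \le \tfrac{\eta}{\hat\beta}\sup_t\|z - z^\star\| + \tfrac{\xi}{\hat\beta}\sup_t\|\hat z - z\|$, and collecting the $\|\hat z - z\|$ terms using $\hat\beta > \xi$ (so $1-\xi/\hat\beta>0$) gives the claimed bound \eqref{eq:hatzz}. The main obstacle I anticipate is precisely the legitimacy of this rearrangement: before dividing by $1-\xi/\hat\beta$ one must know a priori that $\sup_t\|\hat z - z\|$ is finite, otherwise the algebra is vacuous. A clean fix is to start from a crude finite a priori bound on $\|\hat z - z\|$ (available since the observer is contracting and $\|z-z^\star\|$ is assumed bounded, so $\|\dot z_\mathrm{u}\|$ is bounded), feed it through the differential inequality $\dot r^{\mathrm{obs}} \le -\hat\beta r^{\mathrm{obs}} + \|\dot z_\mathrm{u}\|$ via a Gr\"onwall/comparison argument to obtain a sharper bound, and iterate; the fixed point of this iteration is exactly $\tfrac{\eta}{\hat\beta-\xi}\sup_t\|z-z^\star\|$. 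The subsidiary points that $\|\dot z_\mathrm{u}\|\le\|\dot z\|$ in the chosen metric and that the ``after transients'' qualifier is uniform enough to survive taking suprema need only a brief remark.
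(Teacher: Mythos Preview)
Your proposal is correct and follows essentially the same route as the paper: apply Lemma~\ref{lem:zpdzstar} to the contracting observer (with $z_\mathrm{u}$ as the moving target) to get $\|\hat z - z\|\le \hat\beta^{-1}\sup_t\|\dot z_\mathrm{u}\|$, bound $\|\dot z_\mathrm{u}\|\le\|\dot z\|\le\|f\|+\|d\|$, insert the Lipschitz estimates \eqref{eq:lip1}--\eqref{eq:lip2}, and solve the resulting self-referential inequality using $\hat\beta>\xi$. Your explicit attention to the finiteness of $\sup_t\|\hat z - z\|$ before dividing by $1-\xi/\hat\beta$, and the bootstrap/iteration you propose to justify it, is a point the paper's proof leaves implicit.
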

\begin{theorem} \label{theo:upbound} Given the conditions stated in Lemma \ref{lem:zhatbounded} and $\beta (\hat{\beta} -\xi) - \eta \xi >0 $, the steady state optimal tracking error is upper bounded
\begin{equation} \label{eq:z_to_star_2}
 \|z-z^\star\| \leq \frac{ \hat{\beta} -\xi}{\beta (\hat{\beta} -\xi) - \eta \xi} \sup_t \|\dot{z}^\star\|
\end{equation}
\end{theorem}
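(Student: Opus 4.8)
The plan is to chain together the three steady-state estimates already in hand and then close the resulting feedback loop by a small-gain argument. First I would split the optimal tracking error through the ideal PD trajectory by the triangle inequality, $\|z - z^\star\| \le \|z - z^{\mathrm{pd}}\| + \|z^{\mathrm{pd}} - z^\star\|$, and bound the second term directly by Corollary \ref{co:pd_to_star}, which gives $\|z^{\mathrm{pd}} - z^\star\| \le \frac{1}{\beta}\sup_t\|\dot{z}^\star\|$ after the exponential transient at rate $\beta$.

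Next I would bound the first term. Corollary \ref{co:appx_to_pd} gives $\|z - z^{\mathrm{pd}}\| \le \frac{\xi}{\beta}\sup_t\|\hat{z}(t) - z(t)\|$, and Lemma \ref{lem:zhatbounded} — whose hypotheses $\hat{\beta} > \xi$ and a priori boundedness of $\|z - z^\star\|$ are exactly the standing assumptions of the theorem — gives $\sup_t\|\hat{z} - z\| \le \frac{\eta}{\hat{\beta} - \xi}\sup_t\|z - z^\star\|$. Substituting both back into the triangle inequality and taking the supremum over $t$ yields the self-referential estimate
\begin{equation*}
\sup_t\|z - z^\star\| \le \frac{\xi\eta}{\beta(\hat{\beta} - \xi)}\,\sup_t\|z - z^\star\| + \frac{1}{\beta}\,\sup_t\|\dot{z}^\star\|.
\end{equation*}

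Finally I would rearrange this inequality. Moving the first right-hand term to the left gives $\big(1 - \frac{\xi\eta}{\beta(\hat{\beta} - \xi)}\big)\sup_t\|z - z^\star\| \le \frac{1}{\beta}\sup_t\|\dot{z}^\star\|$; the hypothesis $\beta(\hat{\beta} - \xi) - \eta\xi > 0$ states precisely that the loop gain $\frac{\xi\eta}{\beta(\hat{\beta} - \xi)}$ is strictly less than one, so the bracket is positive and dividing through produces $\sup_t\|z - z^\star\| \le \frac{\hat{\beta} - \xi}{\beta(\hat{\beta} - \xi) - \eta\xi}\sup_t\|\dot{z}^\star\|$, which is \eqref{eq:z_to_star_2}.

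The one genuinely delicate point is that last rearrangement: the three estimates form a feedback cascade, and one is not allowed to cancel $\sup_t\|z - z^\star\|$ from both sides unless it is already known to be finite. That finiteness is supplied by the standing hypotheses carried over from Lemma \ref{lem:zhatbounded}, so once it is invoked the argument is a textbook small-gain contraction, with $\beta(\hat{\beta} - \xi) > \eta\xi$ playing the role of the small-gain condition. As in the preceding corollaries, the bound should be read as holding asymptotically in $t$ (after exponential transients at the relevant rates), not pointwise from $t = 0$.
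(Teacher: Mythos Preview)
Your proposal is correct and follows essentially the same route as the paper: triangle inequality through $z^{\mathrm{pd}}$, then Corollaries \ref{co:pd_to_star} and \ref{co:appx_to_pd} for the two pieces, then Lemma \ref{lem:zhatbounded} to close the loop and solve for $\sup_t\|z-z^\star\|$. Your explicit remark that the rearrangement requires a priori finiteness of $\sup_t\|z-z^\star\|$ (inherited from the hypotheses of Lemma \ref{lem:zhatbounded}) is a worthwhile clarification that the paper leaves implicit.
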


\begin{proof}
\begin{subequations}
\begin{align}
 \|z- z^\star\|  & \leq \|z- z^\mathrm{pd}\| + \|z^\mathrm{pd} - z^\star\|  \\
&\leq  \frac{1}{\beta}\sup_t\left(\xi\|\hat{z} - z\| + \|\dot{z}^\star\|\right)  \label{eq:zzstar2}\\
&\leq \frac{1}{\beta}\sup_t\left(\frac{\eta \xi}{\hat{\beta} - \xi} \|z-z^\star\| + \|\dot{z}^\star\|\right) \label{eq:zzstar3}
\end{align}
\end{subequations}
where we use relation \eqref{eq:hatzz} to arrive at the last inequality from \eqref{eq:zzstar2}. By solving for $\sup_t \|z-z^\star\|$ we obtain \eqref{eq:z_to_star_2}.
\end{proof}
Note, that in the limit of perfect observer with $\hat{\beta} \to \infty$ one recovers the bound \eqref{eq:zzs}.
\begin{corollary} \label{co:z_to_zpd}
The distance between the perturbed and ``true'' primal-dual dynamics can be bounded as
\begin{equation}
 \|z - z^\mathrm{pd}\| \leq \frac{1}{\beta}\frac{\eta \xi}{\beta (\hat{\beta} - \xi) -\eta \xi}\sup_t \|\dot{z}^\star\|
\end{equation}
\end{corollary}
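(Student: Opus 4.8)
The plan is to obtain this bound purely by composing the three steady-state estimates already established earlier in this section. First I would invoke Corollary~\ref{co:appx_to_pd}, which, after the exponential transients at rate $\beta$ have decayed, gives
\begin{equation*}
\|z - z^\mathrm{pd}\| \leq \frac{\xi}{\beta}\sup_t\|\hat{z} - z\|.
\end{equation*}
This reduces the task to controlling the observer mismatch $\|\hat{z} - z\|$ in terms of the rate of motion of the instantaneous optimum.

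Next I would substitute the observer-error bound of Lemma~\ref{lem:zhatbounded}, namely $\sup_t\|\hat{z} - z\| \leq \frac{\eta}{\hat{\beta} - \xi}\sup_t\|z - z^\star\|$; this is legitimate because the hypotheses of that lemma ($\hat{\beta} > \xi$ and $\|z-z^\star\|$ bounded) are exactly the standing assumptions inherited from Theorem~\ref{theo:upbound}. Inserting this into the previous display yields $\|z - z^\mathrm{pd}\| \leq \frac{\eta\xi}{\beta(\hat{\beta} - \xi)}\sup_t\|z - z^\star\|$.

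Finally I would plug in the optimal-tracking bound of Theorem~\ref{theo:upbound}, $\sup_t\|z - z^\star\| \leq \frac{\hat{\beta} - \xi}{\beta(\hat{\beta} - \xi) - \eta\xi}\sup_t\|\dot{z}^\star\|$. The factor $(\hat{\beta} - \xi)$ cancels between the numerator here and the denominator of the preceding estimate, leaving precisely
\begin{equation*}
\|z - z^\mathrm{pd}\| \leq \frac{1}{\beta}\,\frac{\eta\xi}{\beta(\hat{\beta} - \xi) - \eta\xi}\sup_t\|\dot{z}^\star\|,
\end{equation*}
which is the claimed inequality.

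I expect no genuine technical obstacle: the statement is a bookkeeping composition of Corollary~\ref{co:appx_to_pd}, Lemma~\ref{lem:zhatbounded}, and Theorem~\ref{theo:upbound}. The only point deserving a moment's care is that each intermediate estimate is an asymptotic ($t\to\infty$) statement of the form $\sup_t(\mathrm{LHS}) \le C\,\sup_t(\mathrm{RHS})$, so the chaining is valid only once all the relevant suprema are known to be finite — which is ensured by the hypothesis that $\sup_t\|\dot{z}^\star\|$ is bounded together with the positivity condition $\beta(\hat{\beta} - \xi) - \eta\xi > 0$ guaranteeing that the final denominator is positive.
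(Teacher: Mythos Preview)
Your argument is correct and follows exactly the route the paper indicates: chaining Corollary~\ref{co:appx_to_pd} with Lemma~\ref{lem:zhatbounded} (and then Theorem~\ref{theo:upbound}, which you rightly make explicit even though the paper's one-line hint leaves it implicit). The cancellation of the $(\hat{\beta}-\xi)$ factor and your remark about the finiteness of the intermediate suprema under the standing hypotheses are both on point.
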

Corollary \ref{co:z_to_zpd} can be easily proved by combining the results from Corollary \ref{co:appx_to_pd} and Lemma \ref{lem:zhatbounded}.

The bounds derived in this section are illustrated with a practical power system in Section \ref{sec:sim}. While their practical relevance should be assessed in the context of specific systems with well-defined structural features, the bounds are tight if one assumes that only Lipschitz constants are known. For example, tightness of the bound \eqref{eq:zzs} can be established by considering the simple scalar ODE $\dot{z}(t) = -z(t) + t$ with $z(0) = 0$. 



\subsection{Performance of layered optimization architectures}
Our results could be naturally applied to multi-layer optimization systems commonly occurring in nature and engineering. In these systems, each layer typically performs its own optimization \cite{changeux1973theory, simon1991architecture, chiang2007layering}, and interacts with other layers. Usually, the dynamics of the layers are separated in time-scales, so that the dynamics of higher levels is slower in comparison to that of lower ones. In engineering systems, the algorithms employed on the individual layers are often designed with two assumptions in mind: that the lower layer has converged to its optimal equilibrium, and that the higher layer inputs can be assumed to be constant. In this section, we study a broader class of such systems, constrained only to be Lipschitz and contracting.

Mathematically, the setting above can be expressed by introducing the subsets of PD variables $z_k$ corresponding to different layers of optimization. For notational simplicity, we assume that each layer interacts directly with two neighboring layers and the ``true'' PD dynamics is described by
\begin{equation}
    \dot{z}_k = f_k(z_{k-1}, z_k, z_{k+1}, t)
\end{equation}
From the viewpoint of layer $k$, the function $z_{k-1}$ can be considered as an exogenous factor, which affects the position of instant equilibrium $z_k^\star = z_k^\star(z_{k-1}(t), t)$. On the other hand, we assume that each layer $k$ is designed under the assumption that all the faster layers have equilibrated and so $z_{k+1}$ is replaced by $z_{k+1}^\star(z_k, t)$. In this case, following the same logic as in previous section, the actual dynamics can be represented as $\dot{z}_k = f_k + d_k$ with
\begin{equation}\label{eq:dk}
    d_k = f_k(z_{k-1}, z_k, z_{k+1}^\star(z_k, t), t) - f_k(z_{k-1}, z_k, z_{k+1}, t)
\end{equation}
\begin{theorem}
Consider a multi-layer optimization system described above. Assume that $f_k$ is Lipschitz with respect to $z_k$ and $z_{k+1}$ with constants denoted as $\eta_k, \xi_k$ respectively. Furthermore, assume that each function $z_k^\star(z_{k-1}, t)$ is also Lipschitz with respect to $z_{k-1}$, with Lipschitz constant $\rho_k$. 

Let $ \gamma_k = 1- \eta_k \tau_{k+1} \rho_{k+1}$.  For small enough Lipschitz constants,  such that  $\gamma_k  > 0 $
and $\gamma_k  \beta_k >  \xi_k$, the system is stable and its performance is characterized by inequalities  $\|z_k - z_k^\star\| \leq \tau_k \sup_t\|\dot{z}_{k}^\star\| $ with $\tau_k$ given by \eqref{eq:zzs} or \eqref{eq:z_to_star_2} for the lower layer, and for the higher ones by 
\begin{equation}\label{eq:taukfinal}
    \tau_k  = \frac{ \gamma_k}{\gamma_k  \beta_k  - \xi_k}.
\end{equation}
\end{theorem}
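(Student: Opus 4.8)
The plan is to prove the bound by induction over the layers, computing the constants $\tau_k$ recursively starting from the base layer --- the one that has no faster neighbour whose equilibrium it substitutes (the ``lower'' layer in the statement) --- and working toward the slowest one. The key observation is that, from the vantage point of layer $k$, the faster layer $k+1$ plays exactly the role of the imperfect observer in Theorem~\ref{theo:upbound}: the tracking error $\|z_{k+1}-z_{k+1}^\star\|$ is the analogue of the observer error $\|\hat z-z\|$, and the substitution error $d_k$ of \eqref{eq:dk} is the analogue of the disturbance $d$. At the base layer we have $d_k=0$, so Corollary~\ref{co:pd_to_star} gives the bound with $\tau_k=1/\beta_k$ as in \eqref{eq:zzs}; if that layer additionally relies on an imperfect observer of a plant, Theorem~\ref{theo:upbound} gives \eqref{eq:z_to_star_2} instead. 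This anchors the induction.

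For the inductive step, fix layer $k$, assume the claimed bound for layer $k+1$, and write the actual layer-$k$ dynamics as $\dot z_k=f_k+d_k$ with $d_k$ given by \eqref{eq:dk}. Applying Lemma~\ref{lem:zpdzstar} and Lemma~\ref{le:unobservable_to_pd} to layer $k$ exactly as in Corollary~\ref{co:appx_to_pd} yields, in steady state, $\|z_k-z_k^\star\|\le\|z_k-z_k^\mathrm{pd}\|+\|z_k^\mathrm{pd}-z_k^\star\|\le\frac{1}{\beta_k}\big(\sup_t\|d_k\|+\sup_t\|\dot z_k^\star\|\big)$. Lipschitzness of $f_k$ in its $z_{k+1}$ argument, applied to \eqref{eq:dk}, gives $\|d_k\|\le\xi_k\|z_{k+1}-z_{k+1}^\star\|$, where $z_{k+1}^\star=z_{k+1}^\star(z_k(t),t)$.

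The heart of the argument is to convert $\|z_{k+1}-z_{k+1}^\star\|$ into a term proportional to $\|z_k-z_k^\star\|$, in analogy with Lemma~\ref{lem:zhatbounded}. The inductive hypothesis applied to layer $k+1$ --- whose slow, exogenous neighbour is layer $k$ --- gives $\|z_{k+1}-z_{k+1}^\star\|\le\tau_{k+1}\sup_t\|\dot z_{k+1}^\star\|$; differentiating $z_{k+1}^\star(z_k(t),t)$ and using the Lipschitz constant $\rho_{k+1}$ yields $\|\dot z_{k+1}^\star\|\le\rho_{k+1}\|\dot z_k\|$; and, since $f_k$ vanishes at layer $k$'s equilibrium, the Lipschitz bound \eqref{eq:lip2} controls $\|\dot z_k\|$ by a constant of order $\eta_k$ times $\|z_k-z_k^\star\|$. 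Chaining these estimates, the $\|d_k\|$ term contributes back a multiple of $\eta_k\tau_{k+1}\rho_{k+1}\,\|z_k-z_k^\star\|$ on the right-hand side of the triangle-inequality bound above; moving it to the left produces the factor $\gamma_k=1-\eta_k\tau_{k+1}\rho_{k+1}$, and solving the resulting scalar inequality for $\sup_t\|z_k-z_k^\star\|$ gives $\tau_k=\gamma_k/(\gamma_k\beta_k-\xi_k)$, the hypotheses $\gamma_k>0$ and $\gamma_k\beta_k>\xi_k$ being exactly what keeps the bound finite and positive. This closes the induction.

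I expect the main obstacle to be the careful bookkeeping of the self-referential loop $d_k\to\|z_{k+1}-z_{k+1}^\star\|\to\|\dot z_{k+1}^\star\|\to\|\dot z_k\|\to\|z_k-z_k^\star\|$, including verifying that it closes with the stated constant and that the ``steady-state / after exponential transients'' qualifiers remain consistent down the hierarchy --- which they should, because faster layers settle first. Two supporting points also need attention: bounding $\|\dot z_k\|$ by the \emph{closed-loop} Lipschitz constant of layer $k$ (controlled by $\eta_k$, with a correction of order $\xi_k\rho_{k+1}$ from the $z_{k+1}^\star(z_k,t)$ substitution), and keeping the explicit time dependence of $z_{k+1}^\star$ from polluting a bound meant to be expressed purely through $\sup_t\|\dot z_k^\star\|$.
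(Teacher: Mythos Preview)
Your proposal is correct and follows essentially the same route as the paper's proof: bound $\|d_k\|$ via the Lipschitz constant in the $z_{k+1}$ slot, invoke the inductive estimate $\|z_{k+1}-z_{k+1}^\star\|\le\tau_{k+1}\sup_t\|\dot z_{k+1}^\star\|$, convert $\|\dot z_{k+1}^\star\|$ to $\rho_{k+1}\|\dot z_k\|$, bound $\|\dot z_k\|\le\|f_k\|+\|d_k\|$ with $\|f_k\|$ controlled by the Lipschitz constant in $z_k$ to obtain a self-referential inequality, and then feed the resulting bound on $\|d_k\|$ into the contraction estimate $\|z_k-z_k^\star\|\le\beta_k^{-1}(\sup_t\|\dot z_k^\star\|+\sup_t\|d_k\|)$ to solve for $\tau_k$. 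The only cosmetic discrepancy is which of $\eta_k,\xi_k$ you attach to which argument of $f_k$---the paper's own proof swaps them relative to the theorem statement---but the structure, the self-referential loop you flag as the main obstacle, and the resulting formula \eqref{eq:taukfinal} are identical.
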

\begin{proof}
We start by bounding the term $d_k$. Using the definition \eqref{eq:dk} we have 
\begin{align}
\|d_k\| &\leq \eta_k \|z_{k+1} - z_{k+1}^\star\| \nonumber\\
&\leq \eta_k \tau_{k+1} \sup_t \|\dot{z}_{k+1}^\star\| \nonumber\\
&\leq \eta_k \tau_{k+1}\rho_{k+1}\|\dot{z}_k\| \nonumber\\
&\leq \eta_k \tau_{k+1}\rho_{k+1} \sup_t\left(\xi_k \|z_k - z_k^\star\| + \|d_k\|\right) \nonumber
\end{align}
Hence, whenever $\eta_k\tau_{k+1}\rho_{k+1} < 1$ we have
\begin{equation} \label{eq:dkfinal}
    \|d_k\| \leq \frac{\xi_k \|z_k - z_k^\star\|}{1-\eta_k \tau_{k+1}\rho_{k+1}}
\end{equation}
Next, as long as the system $f_k$ is contracting, we have $\|z_k - z_k^\star\| \leq \left(\|\dot{z}_k^\star\| + \|d_k\|\right)/\beta_k$. Using \eqref{eq:dkfinal}, assuming $\xi_k  < \beta_k (1-\eta_k \tau_{k+1}\rho_{k+1})$, we arrive at \eqref{eq:taukfinal}.
\end{proof}

Note, that while multi-scale PD dynamics have been a motivation for this section analysis, the results apply more broadly to general multi-scale contracting systems, not necessarily optimization ones. On the other hand, many practical iterative optimization algorithms in discrete time can be viewed as perturbed versions of the baseline continuous dynamics and thus admit analysis with the proposed techniques. Our multiple time-scale results are also applicable to the \emph{virtual} contracting systems used to analyze synchronization phenomena \cite{wang2005partial}. In particular, quorum sensing strategies \cite{tabareau2010synchronization, russo2010global} can be used to coordinate multiple dynamics.

\section{Numerical simulations} \label{sec:sim}
\begin{figure}[t]
    \centering
    \includegraphics[width=1\columnwidth]{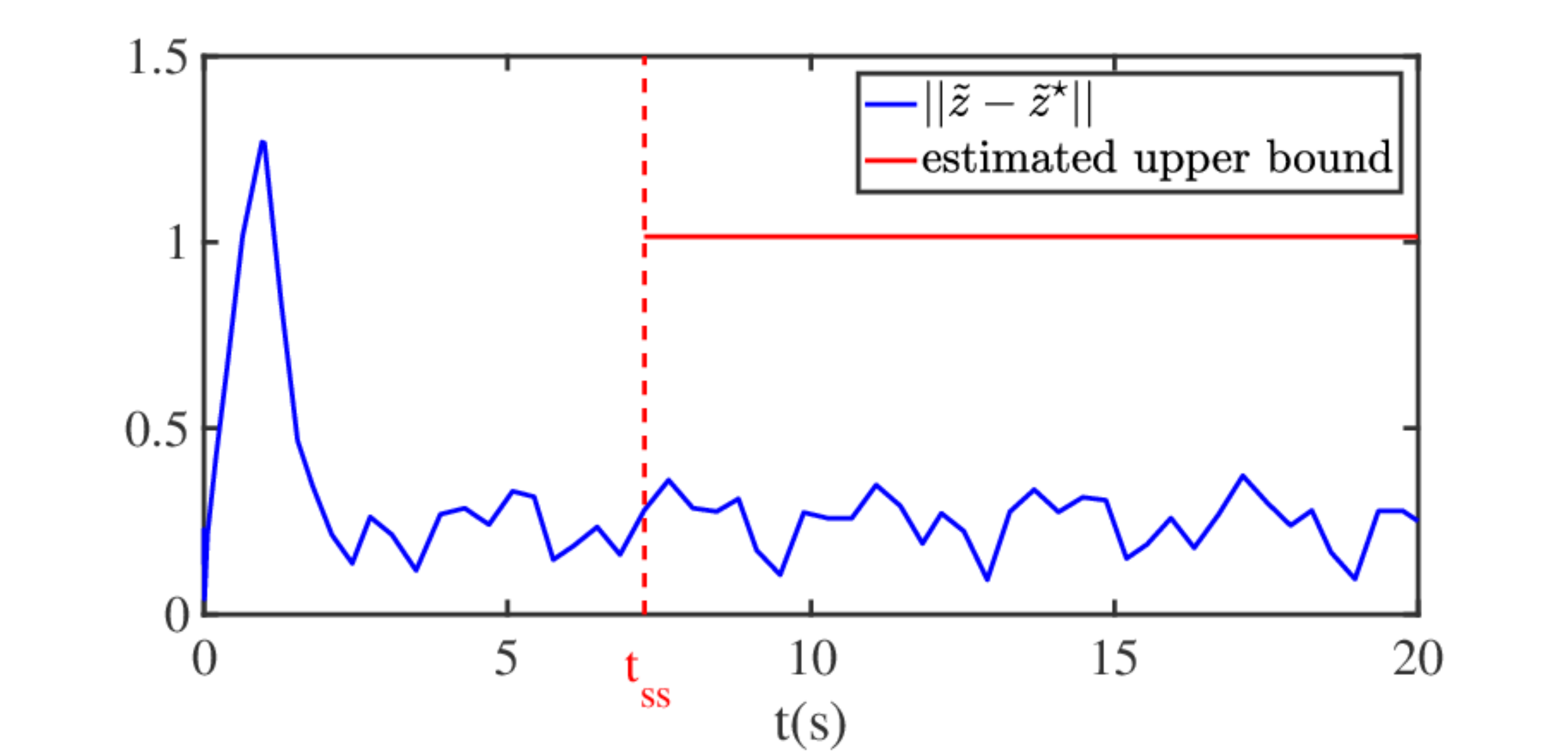}
	\caption{The long-term steady-state error is upper curbed by a bound (the red line) proportional to instantaneous optimum rate in Theorem \ref{theo:upbound}.}
		\label{fig:errorbound}
\end{figure}
In this section, we illustrate the applications of the derived bounds by considering the so-called Automatic Generation Control (AGC) of power systems \cite{Kundur,simpson2015secondary}. There has been a lot of interest recently in exploring the optimization perspective on the frequency control, see e.g., \cite{LowAGC, NaliPD} for review. As a proof of concept, here we look at a simplified AGC model designed to restore the frequency of the system. 



The simplified rescaled AGC model can be represented in PD form, with the Lagrangian
\begin{equation}  \label{eq:Lagrangian}
\ell = \frac{1}{2} \omega^T D \omega + \omega^T\left( B^{1/2}E^T p_\mathcal{E} - k u_{\mathrm{agc}}\right).
\end{equation}
We define $z = (\omega, p_\mathcal{E}, u_{\mathrm{agc}})$ wherein $\omega$ be the primal and $(p_\mathcal{E}, u_{\mathrm{agc}})$ be the dual variables. The variable vector $z$ represents the dimensionless physical states: frequency, line powers, and AGC efforts. Matrix $E$ describes the network topology, $B$ the rescaled line susceptances, $D$ the rescaled damping ratios, and $k$ the secondary control gains. The PD form $\dot{z} = \left(-\partial_{\omega}{\ell},\partial_{p_\mathcal{E}}{\ell},  \partial_{u_\mathrm{agc}}{\ell}\right)$ associated with the Lagrangian function \eqref{eq:Lagrangian} recovers frequency dynamics and the simplified AGC \cite{Kundur, SimpsonQuadratic}.

For the perturbed system, we consider an extension of the model with additional turbine delays modeled as $\dot{\hat{u}}_{\mathrm{agc}} = \frac{1}{T}(u_{\mathrm{agc}} - \hat{u}_{\mathrm{agc}})$. Such delays play an important role in frequency transients and should be considered in any practical studies. In this formulation, the frequency dynamics responds to the $\hat{u}_{\mathrm{agc}}$, rather than the AGC effort $u_{\mathrm{agc}}$. The resulting frequency equations becomes $\dot{\omega} = -\partial_{{\omega}}{\ell} + k (\hat{u}_\mathrm{agc} - u_\mathrm{agc})$.

We implement AGC on a SMIB (single-machine infinity-bus system) which consists of a generator and a purely inductive line. Further, we exert on the machine a sinusoidal exogenous torque which represents persistent load changes. Figure \ref{fig:errorbound} compares the actual response to the estimated bound.

\section{Concluding remarks}
In this work, we establish the strict contraction of the PD algorithm, applicable to a class of optimization problems that arise in network flows and distributed optimization. Strict contraction allows us to characterize the performance and robustness of the PD dynamics with respect to common deviations from the ``true'' PD dynamics. In particular, we consider the case of imperfect observers and also derive recursive bounds for hierarchical multi-scale optimization systems. 

While in this paper we restricted ourselves to systems with equality constraints, future work will include more general extensions to inequality constraints. Furthermore, saddle-node dynamics also appear naturally in Brayton-Moser potentials~\cite{ortega2003power, cavanagh2018transient}, which suggests an additional path for future research. Application-wise, we plan to explore the effect of inductive line delays~\cite{vorobev2017high, cavanagh2018transient}, on the performance of secondary controls~\cite{dorfler2016breaking} in microgrids. 

In recent years, the multiple-time-scale optimization perspective has also taken on increased importance in natural systems,  particularly in the context of systems biology~ \cite{kirschner2005plausibility,kashtan2005spontaneous, del2008modular}. In the brain, multiple interactions between different functional levels occur on a broad range of time-scales, involving weak or strong feedback interactions between genes, transcription factors, synapse formation, and global long-range connectivity~\cite{changeux2017climbing}. In such systems, ``general Darwinism”~\cite{changeux1973theory, edelman1987neural}  can play the role of an optimization criterion at every level~\cite{changeux2017climbing}, constrained by factors such as energy availability. Applying the tools developed in this paper to system modeling in such contexts is an additional subject for further research.


\bibliographystyle{IEEEtran}        
\bibliography{main}
\end{document}